\documentclass{amsart}
\usepackage{amsmath,amsfonts,amssymb,amsthm}
\usepackage[all]{xy}
\usepackage{tikz}
\usepackage{comment}

\newcommand{\uid}{
\begin{tikzpicture}[scale=0.55]
\draw[line width = 0.06cm] (0,-0.01) -- (0,2.02);
\draw[line width = 0.06cm] (1,-0.01) -- (1,2.02);
\end{tikzpicture}}


\newcommand{\Dtst}{
 \begin{tikzpicture}[scale=0.55]
\fill[black!35!white] (0,3.01) .. controls (0,2) and (0.5,2).. (1.5,1.5) ..  controls (1,2) and (1,2.5) .. (1,3.01);
\fill[black!35!white] (3,3.01) .. controls (3,2) and (2.5,2) .. (1.5,1.5) .. controls (2,2) and (2,2.5) .. (2,3.01);
\draw[line width = 0.07cm, white] (1.1,1.69) -- (1.9, 1.45);
\draw[line width = 0.06cm] (0,3) .. controls (0,2) and (0.5,2) .. (1.5,1.5) .. controls (2.5,1) and (3,1) .. (3,0);
\draw[line width = 0.08cm, white] (1.1,1.69) -- (1.9, 1.31);
\draw[line width = 0.06cm] (1,3) .. controls (1,2.5) and (1,2) .. (1.5,1.5) .. controls (2,1) and (2,0.5) .. (2,0);
\draw[line width = 0.08cm, white] (1.35,1.66) -- (1.65, 1.33);
\draw[line width = 0.06cm] (2,3) .. controls (2,2.5) and (2,2) .. (1.5,1.5) .. controls (1,1) and (1,0.5) .. (1,0);
\draw[line width = 0.07cm, white] (1.625,1.654) -- (1.383, 1.355);
\draw[line width = 0.06cm] (3,3) .. controls (3,2) and (2.5,2) .. (1.5,1.5) .. controls (0.5,1) and (0,1) .. (0,0);
  \end{tikzpicture}
}

\newcommand{\ptwist}{
 \begin{tikzpicture}[scale=0.55]
\fill[black!35!white] (0,-0.01) .. controls (0,0.5) and (0,0.5) .. (0.5,1) .. controls (1,0.5) and (1,0.5) .. (1,-0.01);
\draw[line width = 0.06cm] (0,-0.01) .. controls (0,0.5) and (0,0.5) .. (0.5,1) .. controls (1,1.5) and (1,1.5) .. (1,2);
\draw[line width = 0.07cm, white] (0.4,0.9) .. controls (0.5,1) and (0.5,1) .. (0.6,1.1);
\draw[line width=0.06cm] (1,-0.01) .. controls (1,0.5) and (1,0.5) .. (0.5,1) .. controls (0,1.5) and (0,1.5) .. (0,2);
  \end{tikzpicture}}

\newcommand{\pplaincrossing}{
\begin{tikzpicture}[scale=0.55]
\draw[line width = 0.06cm] (0,3) .. controls (0,2) and (2,1) .. (2,0);
\draw[line width = 0.06cm] (1,3) .. controls (1,2) and (3,1) .. (3,0);
\draw[line width = 0.8cm, white] (0.5,0) .. controls (0.5,1) and (2.5,2) .. (2.5,3);
\draw[line width = 0.06cm] (0,0) .. controls (0,1) and (2,2) .. (2,3);
\draw[line width = 0.06cm] (1,0) .. controls (1,1) and (3,2) .. (3,3);
\end{tikzpicture}}

\addtolength{\hoffset}{-1.2cm}
\addtolength{\textwidth}{2.4cm}

\newtheorem{Theorem}{Theorem}[section]
\newtheorem{Proposition}[Theorem]{Proposition} 
\newtheorem{Lemma}[Theorem]{Lemma}

\newtheorem{Corollary}[Theorem]{Corollary}

\newtheorem{Definition}[Theorem]{Definition}
\theoremstyle{definition}

\newtheorem{Motivation}{Motivation}
\newtheorem{Comment}[Theorem]{Comment}

\newcommand{\wt}{\operatorname{wt}}

\newcommand{\bc}{\Bbb{C}}
\newcommand{\bq}{\Bbb{Q}}

\newcommand{\g}{\mathfrak{g}}

\newcommand{\Flip}{\mbox{Flip}}

\newcommand{\Id}{\mbox{Id}}

\newcommand{\C}{\mathcal{C}}

\newcommand{\barr}{\text{bar}}

\theoremstyle{definition}

\begin{document}

\title[A formula for the $R$-matrix]{A half-twist type formula for the $R$-matrix of a symmetrizable Kac-Moody algebra}

\author{Peter Tingley}
\email{P.Tingley@ms.unimelb.edu.au}
\address{Department of Mathematics, University of Melbourne, Parkville, VIC, 3010,  Australia}

\begin{abstract}
Kirillov-Reshetikhin and  Levendorskii-Soibelman developed a formula for the universal $R$-matrix of $U_q(\g)$ of the form $R=(X^{-1} \otimes X^{-1}) \Delta(X)$. The action of $X$ on a representation $V$ permutes weight spaces according to the longest element in the Weyl group, so is only defined when $\g$ is of finite type. We give a similar formula which is valid for any symmetrizable Kac-Moody algebra. This is done by replacing the action of $X$ on $V$ with an endomorphism that preserves weight spaces, but which is bar-linear instead of linear. \end{abstract}

\maketitle 


\section{Introduction}

Let $\g$ be a finite type complex simple Lie algebra, and let $U_q(\g)$ be the corresponding quantized universal enveloping algebra. In \cite{KR:1990} and \cite{LS}, Kirillov-Reshetikhin and Levendorskii-Soibelman developed a formula for the universal R-matrix 
\begin{equation} \label{KReq} R= (X^{-1} \otimes X^{-1}) \Delta(X), \end{equation}
where $X$ belongs to a completion of $U_q(\g)$. The element $X$ is constructed using the braid group element $T_{w_0}$ corresponding to the longest word of the Weyl group, so only makes sense when $\g$ is of finite type. 

The element $X$ defines a vector space endomorphism $X_V$ on each representation $V$, and in fact $X$ is defined by this system $\{ X_V \}$ of endomorphisms. With this point of view, Equation (\ref{KReq}) is equivalent to the claim that, for any finite dimensional representations $V$ and $W$ and $u \in V \otimes W$,
\begin{equation} \label{KReq2}
R (u) =(X_V^{-1} \otimes X_W^{-1}) X_{V \otimes W} (u).
\end{equation}

In the present work we replace $X_V$ with an endomorphism $\Theta_V$ which preserves weight spaces. We show that, for any symmetrizable Kac-Moody algebra $\g$, and any integrable highest weight representations $V$ and $W$ of $U_q(\g)$, the action of the universal $R$-matrix on $u \in V \otimes W$ is given by
\begin{equation} \label{teq1}
R (u)= (\Theta_V^{-1} \otimes \Theta_W^{-1}) \Theta_{V \otimes W} (u). 
\end{equation}
There is a technical difficulty because $\Theta_V$ is not linear over the base field $\bq(q)$, but instead is compatible with the automorphism of $\bq(q)$ which inverts $q$. For this reason $\Theta_V$ depends on a choice of a ``bar involution" on $V$. To make Equation (\ref{teq1}) precise we define a bar involution on $V \otimes W$ in terms of chosen involutions of $V$ and $W$, and then show that the composition  $(\Theta_V^{-1} \otimes \Theta_W^{-1}) \Theta_{V \otimes W}$ does not depend on any choices.

The system of endomorphisms $\Theta$ was previously studied in \cite{RTheta}, where it was used to construct the universal $R$-matrix when $\g$ is of finite type. Essentially we have extended this previous work to include all symmetrizable Kac-Moody algebras. However, the action of $\Theta$ on a tensor product is defined differently here than in  \cite{RTheta}, so the constructions of $R$ are a-priori not identical, and we have not in fact proven that the construction in \cite{RTheta} gives the universal $R$-matrix in all cases. 

This note is organized as follows. In Section \ref{background} we establish notation and review some background material. In Section \ref{maketheta} we construct the system of endomorphisms $\Theta$. In Section \ref{main} prove our main Theorem (Theorem \ref{main_th}), which simply says that our construction gives the universal $R$-matrix in all cases. In Section \ref{Questions} we discuss two questions which motivated this work. 

\subsection{Acknowledgements}
We thank Joel Kamnitzer, Nicolai Reshetikhin and Noah Snyder for many helpful discussions. 
This work was partially supported by the NSF RTG grant DMS-035432 and the Australia Research Council grant DP0879951.

\section{Background} \label{background}

\subsection{Conventions} \label{notation}
We first fix some notation. For the most part we follow conventions from \cite{CP}.

$\bullet$ $\g$ is a complex simple Lie algebra with Cartan algebra $ \mathfrak{h} $ and Cartan matrix $A = (a_{ij})_{i,j \in I}$. 

$\bullet$ $ \langle \cdot , \cdot \rangle $ denotes the paring between $ \mathfrak{h} $ and $ \mathfrak{h}^\star $ and $ ( \cdot , \cdot) $ denotes the usual symmetric bilinear form on either $ \mathfrak{h}$ or $ \mathfrak{h}^\star $.  Fix the usual bases $ \alpha_i $ for $ \mathfrak{h}^\star $ and $ H_i $ for $\mathfrak{h}$, and recall that $ \langle H_i, \alpha_j \rangle = a_{ij} $.  

$\bullet$ $ d_i = (\alpha_i, \alpha_i)/2 $, so that $ (H_i, H_j) = d_j^{-1} a_{ij} $. 

$\bullet$ $\rho$ is the weight satisfying $(\alpha_i, \rho)=d_i$ for all $i$.

$\bullet$ $U_q(\g)$ is the quantized universal enveloping algebra associated to $\g$, generated over $\bq (q)$ by $E_i$ and $F_i$ for all  $i \in I$, and $K_w$ for $w$ in the co-weight lattice of $\g$. As usual, let $K_i= K_{H_i}.$ We use conventions as in \cite{CP}. For convenience, we recall the exact formula for the coproduct:
\begin{equation} \label{coproduct}
\begin{cases}
\Delta{E_i} & = E_i \otimes K_i + 1 \otimes E_i \\
\Delta{F_i} &= F_i \otimes 1 + K_i^{-1} \otimes F_i \\
\Delta{K_i} &= K_i \otimes K_i
\end{cases}
\end{equation}

$\bullet$ We in fact need to adjoint a fixed $k^{th}$ root of $q$ to $\bq(q)$, where $k$ is twice the size of the weight lattice mod the root lattice. We denote this by $q^{1/k}$.

$\bullet$ $V_\lambda$ is the irreducible representation of $U_q(\g)$ with highest weight $\lambda$. 

$\bullet$ $v_\lambda$ is a highest weight vector of $V_\lambda$.

$\bullet$ A vector $v$ in a representation $V$ is called {\it singular} if $E_i(v)=0$ for all $i \in I$.

$\bullet$ $V(\mu)$ denotes the $\mu$ weight space of $V$.

$\bullet$ Throughout, a representation of $U_q(\g)$ means a type 1 integrable highest weight representation.

\subsection{The R-matrix} \label{Rstuff}

We briefly recall the definition of a universal $R$-matrix, and the related notion of a braiding.

\begin{Definition} 
A braided monoidal category is a monoidal category $\C$, along with a natural system of isomorphisms $\sigma^{br}_{V,W}: V \otimes W \rightarrow W \otimes V$ for each pair $V,W \in \C$, such that, for any $U,V, W \in \C$, the following two equalities hold:
\begin{equation} \sigma^{br}_{U,W} \otimes \Id \circ \Id \otimes \sigma^{br}_{V,W} = \sigma^{br}_{U \otimes V, W} \end{equation}
\begin{equation} \Id \otimes \sigma^{br}_{U,W} \circ \sigma^{br}_{U,V} \otimes  \Id = \sigma^{br}_{U,V\otimes W}. \end{equation}
 The system $\sigma^{br} : = \{ \sigma^{br}_{V,W} \}$ is called a braiding on $\C$.
\end{Definition}

Let $\widetilde{U_q(\g) \otimes U_q(\g)}$ be the completion of $U_q(\g) \otimes U_q(\g)$ in the weak topology defined by all matrix elements of $V_\lambda \otimes V_\mu$, for all ordered pairs of deminant integral weights $(\lambda, \mu)$.

\begin{Definition} \label{rR}
A universal $R$-matrix is an element $R$ of $\widetilde{U_q(\g) \otimes U_q(\g)}$ such that $\sigma^{br}_{V,W} := \mbox{Flip} \circ R$ is a braiding on the category of $ U_q(\g) $ representations. Equivalently, an element $R$ is a universal $R$-matrix if it satisfies the following three conditions
\begin{enumerate}
\item \label{intertwine} For all $u \in U_q(\g)$, $R \Delta(u)= \Delta^{op}(u) R.$

\item \label{qtriangle1} $(\Delta \otimes 1)R = R_{13} R_{23},$ where $R_{ij}$ mean $R$ placed in the $i$ and $j^{th}$ tensor factors. 

\item \label{qtriangle2} $(1 \otimes \Delta)R = R_{13} R_{12}.$ 
\end{enumerate}
\end{Definition}

The following theorem is central to the theory of quantized universal enveloping algebra. See \cite{CP} for a discussion when $\g$ is of finite type, and \cite{Lusztig:1993} for the general case. Unfortunately the conventions in \cite{Lusztig:1993} are quite different from those used here. An explicit proof that our statement follows from \cite[Chapter 4]{Lusztig:1993} can be found at  http://www.ms.unimelb.edu.au/$\sim$ptingley/lecturenotes/RandquasiR.pdf.

\begin{Proposition} \label{goodR}
Let $\g$ be a symmetrizable Kac-Moody algebra. Then $U_q(\g)$ has a unique universal $R$-matrix of the form
\begin{equation}
R=  A {\Big (} 1 \otimes 1+ \sum_{\begin{array}{c} \txt{positive integral} \\ \text{weights } \beta \text{ (with} \\ \text{multiplicity)} \end{array} } X_\beta \otimes Y_\beta {\Big )},
\end{equation}
where $X_\beta$ has weight $\beta$, $Y_\beta$ has weight $-\beta$, and for all $v \in V$ and $w \in W$, $A(v \otimes w) = q^{(\wt(v), \wt(w))}$.
\end{Proposition}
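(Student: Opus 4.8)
The plan is the classical one, due to Drinfeld in finite type and to Lusztig in the generality needed here: realize $R$ as the product of the diagonal operator $A$ with the quasi-$R$-matrix $\Theta$, and derive all three axioms of Definition~\ref{rR} from a single recursion. The excerpt already records that the statement is \cite[Chapter~4]{Lusztig:1993} up to a translation of conventions, so in the paper I would simply cite that reference; the skeleton of the argument is as follows.

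First I would isolate the diagonal factor. Using the triangular decomposition $U_q(\g)=U_q^-\,U_q^0\,U_q^+$ and the root-lattice grading $U_q^{\pm}=\bigoplus_\beta U_q^{\pm}(\pm\beta)$, the operator $A$ acting on $V_\lambda\otimes V_\mu$ by $q^{(\wt,\wt)}$ is a well-defined element of $\widetilde{U_q(\g)\otimes U_q(\g)}$, being diagonal for the weight decomposition of every $V_\lambda\otimes V_\mu$. A short calculation from \eqref{coproduct} shows that $A$ commutes with $\Delta(K_i)$ and conjugates $\Delta(E_i),\Delta(F_i)$ into a twisted coproduct in which the $K_i$-factors have been absorbed. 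This reduces the intertwining axiom for $R=A\,\Theta$ to an intertwining identity for the factor $\Theta:=1\otimes1+\sum_\beta X_\beta\otimes Y_\beta$ alone, with $X_\beta\in U_q^+(\beta)$ and $Y_\beta\in U_q^-(-\beta)$.

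Second I would construct $\Theta$ and prove uniqueness. Write $\Theta=\sum_\beta\Theta_\beta$ with $\Theta_\beta\in U_q^+(\beta)\otimes U_q^-(-\beta)$ and $\Theta_0=1\otimes1$. Matching degrees in the intertwining identity turns the relations with $E_i$ and $F_i$ into, for each $\beta$, an inhomogeneous linear equation determining $\Theta_\beta$ from the $\Theta_{\beta-\alpha_i}$. Solvability with a unique solution (given $\Theta_0$) is exactly the non-degeneracy on each graded piece of the bilinear pairing $(\,\cdot\,,\,\cdot\,)\colon U_q^+\times U_q^-\to\bq(q^{1/k})$: one obtains $\Theta_\beta=\sum_b b\otimes b^*$ where $\{b\}$ is a basis of $U_q^+(\beta)$ and $\{b^*\}\subset U_q^-(-\beta)$ is the dual basis. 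Symmetrizability of $\g$ enters precisely here, since it is what lets the pairing be defined from the bilinear form on $\h^\star$; each $U_q^{\pm}(\pm\beta)$ is finite-dimensional, so the dual basis exists and $\Theta_\beta$ is determined. On any $V_\lambda\otimes V_\mu$ only finitely many $\beta$ affect a given matrix element, so $R=A\,\Theta$ lies in the stated completion; uniqueness is then immediate, since $\Theta_0=1\otimes1$ is prescribed by the stated form and the recursion determines every subsequent $\Theta_\beta$.

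Third, the two quasi-triangularity axioms reduce, after pulling out the $A$-factors, to the coproduct identities $(\Delta\otimes1)\Theta=\Theta_{13}\Theta_{23}$ and $(1\otimes\Delta)\Theta=\Theta_{13}\Theta_{12}$ in suitably twisted form; each holds because both sides satisfy the same intertwining recursion with the same leading term, hence agree by the uniqueness from Step~2 (alternatively, from multiplicativity of the pairing). The main obstacle --- and the only place where the Kac--Moody case genuinely exceeds the finite-type bookkeeping --- is the non-degeneracy of the pairing on $U_q^+(\beta)\times U_q^-(-\beta)$ for all $\beta$. Lacking a manageable finite presentation of $U_q^{\pm}$, one instead realizes $U_q^+$ as the quotient of the free algebra by the radical of the pairing, which makes non-degeneracy automatic at the cost of the nontrivial fact that this radical is generated by the quantum Serre relations (equivalently, that integrable highest weight modules separate points of $U_q(\g)$). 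Everything else is routine manipulation with the completion and comparison of recursions.
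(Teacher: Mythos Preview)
Your proposal is correct and aligns with the paper's treatment: the paper does not prove this proposition at all, but simply cites \cite{CP} for finite type and \cite[Chapter~4]{Lusztig:1993} for the general symmetrizable case, pointing to external lecture notes for the translation of conventions. Your sketch is a faithful outline of Lusztig's quasi-$R$-matrix construction (diagonal factor $A$, graded recursion for the remaining factor governed by non-degeneracy of the Drinfeld--Lusztig pairing, and quasi-triangularity via uniqueness of the recursion), which is exactly the cited source.

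One small remark: in this paper the symbol $\Theta$ is reserved for the system of bar-linear endomorphisms defined in Section~\ref{maketheta}, so if you were to include your sketch you should rename Lusztig's quasi-$R$-matrix to avoid a clash.
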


\subsection{Constructing isomorphisms using systems of endomorphisms} \label{autint}
 In this section we review a method for constructing natural systems of isomorphisms $\sigma_{V,W}: V \otimes W \rightarrow W \otimes V$ for representations $V $ and $W$ of $U_q(\g)$. This idea was used by Henriques and Kamnitzer in \cite{cactus}, and was further developed in \cite{Rcommutor}. The data needed is:

\begin{enumerate}

\item An algebra automorphism $C_\xi$ of $U_q(\g)$ which is also a coalgebra anti-automorphism.

\item \label{diagg} A natural system of invertible (vector space) endomorphisms $\xi_V$ of each representation $V$ of $U_q(\g)$ such that the following diagram commutes for all $V$:
\begin{equation} \label{compTheta}
\xymatrix{
V  \ar@(dl,dr) \ar@/ /[rrr]^{\xi_V} &&& V \ar@(dl,dr) \\
U_q(\g)  \ar@/ /[rrr]^{\C_{\xi}} &&&   U_q(\g). \\
}
\end{equation}
\end{enumerate}
It follows immediately from the definition of coalgebra anti-automorphism that 
\begin{equation}
\sigma^\xi := \Flip \circ (\xi_V^{-1} \otimes \xi_W^{-1}) \circ \xi_{V \otimes W}
\end{equation}
is an isomorphism of $U_q(\g)$ representations from $V \otimes W$ to $W \otimes V$. 

In the current work we require a little more freedom: we will sometimes use automorphisms $C_\xi$ of $U_q(\g)$ which are not linear over $\bc(q)$, but instead are bar-linear (i.e. invert $q$). This causes some technical difficulties, which we deal with in Section \ref{maketheta}. 

\begin{Comment}
To describe the data $( \C_\xi, \xi )$, it is sufficient to describe $C_\xi$, and the action of $\xi_{V_\lambda}$ on any one vector $v$ in each irreducible representation $V_\lambda$. This is usually more convenient then describing $\xi_{V_\lambda}$ explicitly. Of course, the choice of $C_\xi$ imposes a restriction on the possibilities for $\xi_{V_\lambda} (v)$, so when we give a description of $\xi$ in this way we are always claiming that the action on our chosen vector in each $V_\lambda$ is compatible with $C_\xi$.
\end{Comment}

\subsection{A useful lemma}
Let $(V_\lambda, v_\lambda)$ and $(V_\mu, v_\mu)$ be irreducible representations with chosen highest weight vectors. Every vector $u \in V_\lambda \otimes V_\mu$ can be written as
\begin{equation}
 u= v_\lambda \otimes c_0 + b_{k-1} \otimes c_1 + \ldots + b_1 \otimes c_{k-1} + b_0 \otimes v_\mu,
\end{equation}
where, for $0 \leq j \leq k-1$, $b_j$ is a weight vector of $V_\lambda$ of weight strictly less then $\lambda$, and $c_j$ a weight vector of $V_\mu$ of weight strictly less then $\mu$. Furthermore, the vectors $b_0 \in V_\lambda$ and $c_0 \in V_\mu$ are uniquely determined by $u$. Thus we can define projections from $V_\lambda \otimes V_\mu$ to $V_\lambda$ and $V_\mu$ as follows:

\begin{Definition} \label{p_def} The projections $p_{\lambda, \mu}^1: V_\lambda \otimes V_\mu \rightarrow V_\lambda$ and $p_{\lambda, \mu}^2: V_\lambda \otimes V_\mu \rightarrow V_\mu$ are given by, for all $u \in  V_\lambda \otimes V_\mu $, 
\begin{align} & p_{\lambda,\mu}^1(u):= b_0 \\
& p_{\lambda, \mu}^2(u):= c_0.
\end{align}
\end{Definition}

\begin{Lemma} \label{find_highest}
Let $S_{\lambda, \mu}$ be the space of singular vectors in $V_\lambda \otimes V_\mu$. 
The restrictions of the maps $p_{\lambda, \mu}^1$ and $p_{\lambda, \mu}^2$ from Definition \ref{p_def} to $S_{\lambda, \mu}$ are injective. \end{Lemma}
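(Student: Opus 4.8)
The plan is to prove each assertion by contraposition: a singular $u \in V_\lambda \otimes V_\mu$ in the kernel of $p^1_{\lambda,\mu}$ (resp.\ of $p^2_{\lambda,\mu}$) must be $0$. First I would note that $S_{\lambda,\mu}$ is graded by weight, since each $E_i$ is homogeneous for the weight grading of $V_\lambda \otimes V_\mu$, and that $p^1_{\lambda,\mu}$ and $p^2_{\lambda,\mu}$ are homogeneous (of degrees $-\mu$ and $-\lambda$), so it suffices to treat a $u$ of a single weight $\nu$. If $\nu = \lambda+\mu$ then $u$ is a scalar multiple of $v_\lambda \otimes v_\mu$ and there is nothing to prove, so we may assume $\nu < \lambda+\mu$, in which case $v_\lambda \otimes v_\mu$ does not occur in $u$. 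Then $p^2_{\lambda,\mu}(u) = 0$ means exactly that $v_\lambda$ does not occur in the first tensor factor of $u$, i.e.\ every weight of $V_\lambda$ occurring there is $< \lambda$; and $p^1_{\lambda,\mu}(u) = 0$ means exactly that $v_\mu$ does not occur in the second factor, so every weight of $V_\mu$ occurring there is $< \mu$.

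The key input is the elementary fact that if $\eta$ is not the highest weight of $V_\lambda$, then any $z \in V_\lambda(\eta) \otimes V_\mu$ with $(E_i \otimes 1)(z) = 0$ for all $i$ is zero: writing $z = \sum_t x_t \otimes y_t$ with $x_t \in V_\lambda(\eta)$ and the $y_t$ linearly independent forces $E_i x_t = 0$ for all $i,t$, but the only singular vectors of the irreducible module $V_\lambda$ are scalar multiples of $v_\lambda$, whose weight $\lambda$ is not $\eta$, so each $x_t = 0$. The mirror statement, with the factors exchanged and $1 \otimes E_i$ in place of $E_i \otimes 1$, holds by irreducibility of $V_\mu$.

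Now suppose $u \neq 0$ is singular of weight $\nu < \lambda+\mu$ with $p^2_{\lambda,\mu}(u) = 0$. Only finitely many weights of $V_\lambda$ occur in the first factor, all $< \lambda$; choose one of them, $\eta$, maximal among these, and let $u_\eta \in V_\lambda(\eta)\otimes V_\mu$ be the corresponding nonzero component of $u$ in $V_\lambda \otimes V_\mu = \bigoplus_{\eta'} V_\lambda(\eta')\otimes V_\mu$. Apply $\Delta(E_i) = E_i \otimes K_i + 1 \otimes E_i$ to $u$ and project onto $V_\lambda(\eta + \alpha_i) \otimes V_\mu$. The summand $1 \otimes E_i$ leaves the first-factor weight unchanged, hence contributes only from the component of $u$ of first-factor weight $\eta + \alpha_i$, which is $0$ by maximality of $\eta$; the summand $E_i \otimes K_i$ raises the first-factor weight by exactly $\alpha_i$, hence contributes exactly $(E_i \otimes K_i)(u_\eta)$. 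Since $E_i u = 0$, we get $(E_i \otimes K_i)(u_\eta) = 0$, and then $(E_i \otimes 1)(u_\eta) = 0$ as $K_i$ is invertible, for every $i$; by the fact above (and $\eta \neq \lambda$), $u_\eta = 0$, a contradiction. The argument for $p^1_{\lambda,\mu}$ is the mirror image: pick $\zeta$ maximal among the weights of $V_\mu$ occurring in the second factor (all $< \mu$), isolate $u^\zeta \in V_\lambda \otimes V_\mu(\zeta)$, and project $E_i u = 0$ onto $V_\lambda \otimes V_\mu(\zeta + \alpha_i)$ to conclude $(1 \otimes E_i)(u^\zeta) = 0$ for all $i$, hence $u^\zeta = 0$.

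I expect the only delicate point to be this last case. Because $\Delta$ is not cocommutative, in $\Delta(E_i)$ the summand $E_i \otimes K_i$ acts on the second --- the ``wrong'' --- tensor factor, so it is not obvious a priori that the $p^2_{\lambda,\mu}$ argument transposes. What saves it is that $K_i$ acts by a scalar on each weight space, hence preserves the weight grading of the second factor; so $E_i \otimes K_i$ cannot move anything into $V_\lambda \otimes V_\mu(\zeta+\alpha_i)$, and the computation proceeds verbatim. Everything else is routine tracking of the weight grading.
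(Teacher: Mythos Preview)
Your argument is correct and follows essentially the same route as the paper's: pick a maximal first-factor weight occurring in $u$, apply $\Delta(E_i)$, use maximality to kill the contribution from $1\otimes E_i$, and conclude from irreducibility of $V_\lambda$ that the corresponding component vanishes. Your treatment is in fact a bit more careful than the paper's, since you track the $K_i$ in $\Delta(E_i)=E_i\otimes K_i+1\otimes E_i$ explicitly and address the asymmetry of the coproduct when handling $p^1_{\lambda,\mu}$; the paper simply asserts that the $p^1$ case is ``completely analogous'' without discussing this point.
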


\begin{proof}
We prove the Lemma only for $p_{\lambda, \mu}^2$, since the proof for $p_{\lambda, \mu}^1$ is completely analogous. Let $c_1, \cdots c_m$ be a weight basis for $V_\mu$. Let $u$ be a singular vector of $V_\lambda \otimes V_\mu$ of weight $\nu$. Then $u$ can be written uniquely as
\begin{equation}
u = \sum_{j=1}^m v_j \otimes c_j,
\end{equation}
where each $v_j$ is a weight vector in $V_\lambda$. Let $\gamma$ be a maximal weight such that there is some $j$ with $\wt(v_j)=\gamma$ and $v_j \neq 0$. It suffices to show that $\gamma=\lambda$, so assume for a contradiction that it does not. Then $v_j$ is not a highest weight vector, so $E_i(v_j) \neq 0$ for some i. But then 
\begin{equation}
 E_i(u)=  \sum_{\wt(v_{j_s})=\gamma} E_i(v_{j_s}) \otimes c_{j_s} + \begin{aligned} & \text{ terms whose first factors have } \\ &\text{ weight strictly less then } \gamma + \alpha_i.
\end{aligned}
\end{equation}
Since the $c_j$ are linearly independent and $E_i(v_j) \neq 0$ for some $j$ with $\wt(v_j)=\gamma$, this implies that $E_i(u) \neq 0$, contradicting the fact that $v$ is a singular vector.\end{proof}

\section{Constructing the system of endomorphisms $\Theta$} \label{maketheta}
Constructing and studying $\Theta = \{\Theta_V\}$ is the technical heart of this work. As we mentioned in the introduction, $\Theta_V$ is bar linear instead of linear, which makes it more difficult to choose a normalization. To get around this, we introduce the notion of a bar involution $\barr_V$ on $V$, and actualy define $\Theta$ on the category of representations with a chosen bar involution. We then define a tensor product on this new category, and show that $(\Theta_{V, \barr_V}^{-1} \otimes \Theta_{W, \barr_W}^{-1}) \circ \Theta_{(V, \barr_V) \otimes (W, \barr_W)}$ does not depend on the choices of $\barr_V$ and $\barr_W$.
The real work is in defining this tensor product, which essentially amounts to defining a $\barr$ involution on $V \otimes W$ in terms of bar involutions $\barr_V$ and $\barr_W$.

\subsection{Bar involution} \label{makebar}
The following $\bq$ algebra involution of $U_q(\g)$ has been studied in several places, for example \cite[Section 1.3]{K}, and is usually called bar involution. We use the notation $C_\barr$ because we will also work with $\barr$ involutions $\barr_V$ on representations $V$, which are compatible with $C_\barr$ in the sense of Equation (\ref{compTheta}).

\begin{Definition} \label{bardef}
$C_\barr: U_q(\g) \rightarrow U_q(\g)$ is the $\bq$-algebra involution defined by
\begin{equation*}
\begin{cases}
C_\barr{q} = q^{-1}\\
C_\barr{K}_i = K_i^{-1}  \\
C_\barr{E}_i = E_i \\
C_\barr{F}_i = F_i.
\end{cases}
\end{equation*}
\end{Definition}

It is perhaps useful to imagine that $q$ is specialized to a complex number on the unit circle (although not a root of unity), so that $C_\barr$ is conjugate linear.

\begin{Definition} \label{barinvdef}
Let $V$ be a representation of $U_q(\g)$. A bar involution on $V$ is a $\bq$-linear involution $\barr_V$ such that
\begin{enumerate}
\item \label{barinv_comp} $\barr_V$ is compatible with $C_\barr$ in the sense that the following diagram commutes:
\begin{equation} 
\xymatrix{
V  \ar@(dl,dr) \ar@/ /[rrr]^{\barr_{V}} &&& V \ar@(dl,dr) \\
U_q(\g)  \ar@/ /[rrr]^{\C_{\barr}} &&&   U_q(\g). \\
}
\end{equation}
\item \label{bar_span} Let $V^{inv}= \{ v \in V \text{ such that } \barr_V(v)=v\}$. Then $V= \bq(q) \otimes_\bq V^{inv}$.

\end{enumerate}
 \end{Definition} 

\begin{Comment}
It is straightforward to check that $C_\barr^2$ is the identity. Along with condition (\ref{bar_span}), this implies that $\barr_V^2$ is the identity, so the term ``involution" is justified.  
\end{Comment}

\begin{Comment}
When it does not cause confusion we will denote $\barr_V (v)$ by $\bar{v}$. 
\end{Comment}

\begin{Proposition} \label{exists1} Fix $\lambda$ and a highest weight vector $v_\lambda \in V_\lambda$. There is a unique $\barr$ involution $\barr_{(V_\lambda, v_\lambda)}$ on $V_\lambda$ such that $\barr_{(V_\lambda, v_\lambda)}(v_\lambda)=v_\lambda$. 
\end{Proposition}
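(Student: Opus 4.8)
The plan is to construct $\barr_{(V_\lambda, v_\lambda)}$ directly using the PBW-type generation of $V_\lambda$ by lowering operators, then verify the two defining properties of a bar involution, and finally argue uniqueness. Recall that $V_\lambda$ is spanned by vectors of the form $F_{i_1} F_{i_2} \cdots F_{i_r}(v_\lambda)$. The only possible definition compatible with $C_\barr$ is to set $\barr_{(V_\lambda, v_\lambda)}\big(F_{i_1}\cdots F_{i_r}(v_\lambda)\big) := F_{i_1}\cdots F_{i_r}(v_\lambda)$ and extend $\bq$-linearly (with $\barr(q \cdot v) = q^{-1}\cdot \barr(v)$). The main content is that this is \emph{well-defined}: if $\sum_s a_s(q)\, F_{I_s}(v_\lambda) = 0$ is a $\bq(q)$-linear relation among such monomials (here $F_{I_s}$ abbreviates a word in the $F_i$), then applying $C_\barr$ to the coefficients must also give a relation $\sum_s a_s(q^{-1})\, F_{I_s}(v_\lambda) = 0$.

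The key step establishing well-definedness is to show that the kernel of the map $U_q(\g)^- \to V_\lambda$, $x \mapsto x(v_\lambda)$, is stable under $C_\barr$. Since $V_\lambda = U_q(\g)^-(v_\lambda)$ and the defining ideal of $V_\lambda$ inside the Verma module is generated by $F_i^{\langle H_i,\lambda\rangle + 1}(v_\lambda)$ together with the relations of the Verma module itself, and since $C_\barr$ fixes each $F_i$ and hence each $F_i^{n}$, one checks $C_\barr$ preserves this ideal. Equivalently, and perhaps more cleanly: one shows that $\barr_{(V_\lambda,v_\lambda)}$ is intertwined with $C_\barr$ on generators — we have $\barr(E_i \cdot w) = E_i \cdot \barr(w)$, $\barr(F_i \cdot w) = F_i \cdot \barr(w)$, and $\barr(K_i \cdot w) = K_i^{-1}\cdot \barr(w)$ for $w = F_{i_1}\cdots F_{i_r}(v_\lambda)$ — where the first of these is the nontrivial one and follows because $C_\barr$ fixes $E_i$ and $F_i$ and the action of $E_i$ on such a monomial is computed purely from the commutation relations $[E_i, F_j] = \delta_{ij}\frac{K_i - K_i^{-1}}{q_i - q_i^{-1}}$, which is \emph{bar-invariant} as an identity (both sides are fixed by $C_\barr$ since $C_\barr(q_i)=q_i^{-1}$ swaps $\frac{K_i-K_i^{-1}}{q_i-q_i^{-1}}$ with itself). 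This forces consistency of the definition on all of $V_\lambda$.

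Once $\barr_{(V_\lambda,v_\lambda)}$ is well-defined, property (\ref{barinv_comp}) (compatibility with $C_\barr$) holds by construction, and $\barr^2 = \Id$ because it fixes the spanning set $\{F_{i_1}\cdots F_{i_r}(v_\lambda)\}$. For property (\ref{bar_span}): the $\bq$-span of $\{F_{i_1}\cdots F_{i_r}(v_\lambda)\}$ is a $\bq$-form $V_\lambda^{inv}$ contained in the fixed set of $\barr$, it spans $V_\lambda$ over $\bq(q)$ since these monomials span over $\bq(q)$, and $\bq(q)\otimes_\bq V_\lambda^{inv} \to V_\lambda$ is injective because a minimal-degree (in $q$) relation with coefficients in $\bq[q,q^{-1}]$ would, after applying $\barr$ and subtracting, produce a shorter one unless all coefficients are $\barr$-fixed scalars, i.e.\ in $\bq$; a standard clearing-denominators argument upgrades this to the $\bq(q)$ statement. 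Uniqueness is immediate: any bar involution $b$ with $b(v_\lambda) = v_\lambda$ must satisfy $b(F_{i_1}\cdots F_{i_r}(v_\lambda)) = F_{i_1}\cdots F_{i_r}\, b(v_\lambda) = F_{i_1}\cdots F_{i_r}(v_\lambda)$ by compatibility with $C_\barr$, hence agrees with $\barr_{(V_\lambda,v_\lambda)}$ on a spanning set.

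The main obstacle is the well-definedness argument — i.e.\ that no $\bq(q)$-linear relation among the monomials $F_{i_1}\cdots F_{i_r}(v_\lambda)$ is destroyed by inverting $q$ in the coefficients. I expect to handle this either by invoking the existence of a bar-involution-invariant integral form of $V_\lambda$ (the lattice $\mathcal{U}^-_{\bq[q,q^{-1}]}(v_\lambda)$, known from Lusztig's or Kashiwara's work on crystal/canonical bases), or, more self-containedly, by the generators-and-relations argument sketched above showing $\ker(U_q(\g)^- \to V_\lambda)$ is $C_\barr$-stable. Everything else is routine.
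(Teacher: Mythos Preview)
Your proposal is correct and follows essentially the same approach as the paper: both fix the monomials $F_{i_1}\cdots F_{i_r}(v_\lambda)$ and extend bar-linearly. The paper's version is terser---it simply chooses a \emph{basis} among these monomials (so well-definedness is automatic) and declares that the resulting map ``is clearly'' a bar involution, whereas you work with the full spanning set and therefore must argue that $\ker(U_q(\g)^-\to V_\lambda)$ is $C_{\barr}$-stable; these two verifications are equivalent in content, and your discussion just makes explicit what the paper leaves to the reader.
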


\begin{proof}
Recall that $V_\lambda$ has a basis consisting of various $F_{i_k} \cdots F_{i_1} v_\lambda$. All of these vectors must be fixed by any bar involution preserving $v_\lambda$, so there is at most one possibility. On the other hand, it is clear that the unique $\bq$-linear map sending 
$f(q) F_{i_k} \cdots F_{i_1} v_\lambda$ to $f(q^{-1}) F_{i_k} \cdots F_{i_1} v_\lambda$ for each of these basis vectors is a bar involution.
\end{proof}

\begin{Corollary} \label{exists}
Every representation $V$ has a (non-unique) bar involution $\barr_V$.
\end{Corollary}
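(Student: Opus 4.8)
The plan is to bootstrap from the irreducible case settled in Proposition \ref{exists1}, using complete reducibility. First I would invoke the standard fact that a type $1$ integrable highest weight representation of $U_q(\g)$ is semisimple, so that $V$ decomposes as a (possibly infinite) direct sum $V = \bigoplus_{a \in A} W_a$ of irreducible highest weight modules, with each $W_a \cong V_{\lambda_a}$ and repetitions among the $\lambda_a$ allowed. For each $a$ I would fix a highest weight vector $v_a \in W_a$ and let $\barr_{W_a}$ be the unique bar involution of $W_a$ with $\barr_{W_a}(v_a) = v_a$ produced by Proposition \ref{exists1}. Then I would define $\barr_V := \bigoplus_{a \in A} \barr_{W_a}$, i.e.\ the $\bq$-linear map whose restriction to each summand $W_a$ is $\barr_{W_a}$.

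It then remains to verify that $\barr_V$ satisfies Definition \ref{barinvdef}. It is a $\bq$-linear involution because each $\barr_{W_a}$ is and elements of the direct sum have finite support. For condition (\ref{barinv_comp}): the $U_q(\g)$-action preserves each $W_a$, and on $W_a$ the pair $(\barr_V, C_\barr)$ acts exactly as $(\barr_{W_a}, C_\barr)$, for which the compatibility square commutes; since it commutes on every summand it commutes on $V$. For condition (\ref{bar_span}): an element $\sum_a v_a$ is $\barr_V$-fixed iff each $v_a$ is $\barr_{W_a}$-fixed, so $V^{inv} = \bigoplus_a W_a^{inv}$; since extension of scalars commutes with direct sums and $\bq(q)\otimes_\bq W_a^{inv} = W_a$ for each $a$, we get $\bq(q)\otimes_\bq V^{inv} = \bigoplus_a W_a = V$.

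For the parenthetical non-uniqueness, it suffices to observe that already on a single irreducible summand there are other choices: replacing $v_a$ by $q\, v_a$ (or by any $U_q(\g)$-scalar multiple not fixed by $q \mapsto q^{-1}$) yields a bar involution sending $q\, v_a$ to $q^{-1} v_a$, hence a different involution; and when a weight $\lambda$ occurs with multiplicity greater than one there is further freedom in choosing highest weight vectors inside the corresponding isotypic component.

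I expect no genuine obstacle here, since the substance was done in Proposition \ref{exists1}; the only point requiring a moment's care is that both axioms of Definition \ref{barinvdef} are \emph{summand-by-summand} conditions, so they survive passage to arbitrary — in particular infinite — direct sums. (If one read ``representation'' in the strict cyclic sense, the statement would be an immediate restatement of Proposition \ref{exists1}, since an integrable cyclic highest weight module is irreducible; the content of the Corollary is precisely the reduction of the general direct-sum case to that one.)
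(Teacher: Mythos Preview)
Your argument is correct and follows exactly the paper's own approach: decompose $V$ into irreducibles, choose a highest weight vector in each, apply Proposition \ref{exists1} to each summand, and take the direct sum of the resulting involutions. The paper's proof is a single sentence to this effect; you have simply spelled out the routine verifications (compatibility and the span condition pass to direct sums) and the non-uniqueness remark, none of which requires anything beyond what you wrote.
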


\begin{proof}
Choose a decomposition of $V$ into irreducible components, and a highest weight vector in each irreducible component, then use Proposition \ref{exists1}.
\end{proof}

\begin{Definition} \label{bbd}
Fix $(V, \barr_V)$ and $(W, \barr_W)$, where $\barr_V$ and $\barr_W$ are involutions of $V$ and $W$ compatible with $C_\barr$. Let $(\barr_V \otimes \barr_W)$ be the vector space involution on $V \otimes W$ defined by $f(q) v \otimes w \rightarrow f(q^{-1}) \bar{v} \otimes \bar{w}$ for all $f(q) \in \bq(q)$ and $v \in V, w \in W$.
\end{Definition}

\begin{Comment}
It is straightforward to check that the action of $(\barr_V \otimes \barr_W)$ on a vector in $V \otimes W$ does not depend on its expression as a sum of elements of the form $f(q) v \otimes w$. The resulting map is a $\bq$-linear involution.
\end{Comment}

\begin{Definition} 
Fix $u \in V_\lambda \otimes V_\mu$ a weight vector of weight $\nu$. Define $v^\beta$ for each weight $\beta$ as the unique element of $V_\lambda(\nu-\beta) \otimes V_\mu(\beta)$ such that
\begin{equation}
u = \sum_{\text{weights } \beta} v^\beta.
\end{equation}
\end{Definition}

\begin{Lemma} \label{tensor_involution}
Fix $(V_\lambda, \barr_{V_\lambda})$ and $(V_\mu, \barr_{V_\mu})$. Let $v_\nu$ be a singular weight vector in $V_\lambda \otimes V_\mu$, and write
\begin{equation}
v_\nu= \sum_{j=1}^N b_j \otimes c_j,
\end{equation}
where each $b_j$ is a weight vector of $V_\lambda$, and each $c_j$ is a weight vector of $V_\mu$. Then
\begin{equation} \label{tbar}
\barr(v_\nu):= \sum_{j=0}^N q^{(\mu,\mu)-(\wt(c_j),\wt(c_j))+ 2(\mu-\wt(c_j), \rho)} \bar{b}_{j} \otimes  \bar{c}_{j}
\end{equation}
is also singular.
\end{Lemma}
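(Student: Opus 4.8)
The plan is to check directly that $E_i\bigl(\barr(v_\nu)\bigr)=0$ for every $i\in I$ (which is what ``singular'' means), organising the computation according to the $V_\mu$-weight. First I would set up notation and bookkeeping. The scalar appearing in (\ref{tbar}) depends on $c_j$ only through $\wt(c_j)$, so write $g(\beta):=q^{(\mu,\mu)-(\beta,\beta)+2(\mu-\beta,\rho)}$ for a weight $\beta$ of $V_\mu$; using $(\alpha_i,\rho)=d_i$ and $(\alpha_i,\alpha_i)=2d_i$ one rewrites this as $g(\beta)=q^{(\mu+\rho,\mu+\rho)-(\beta+\rho,\beta+\rho)}$, and the one consequence of this I will use is the identity
\[
g(\gamma)\,q^{(\alpha_i,\gamma)}=g(\gamma-\alpha_i)\,q^{-(\alpha_i,\gamma)}\qquad\text{for every weight }\gamma .
\]
Next, group the given expansion by $V_\mu$-weight: for each weight $\beta$ of $V_\mu$ set $v^{\beta}:=\sum_{j:\,\wt(c_j)=\beta}b_j\otimes c_j$, an element of $V_\lambda(\nu-\beta)\otimes V_\mu(\beta)$, so that $v_\nu=\sum_\beta v^{\beta}$ and, by (\ref{tbar}) together with Definition \ref{bbd}, $\barr(v_\nu)=\sum_\beta g(\beta)\,\overline{v^{\beta}}$ where $\overline{v^{\beta}}:=(\barr_{V_\lambda}\otimes\barr_{V_\mu})(v^{\beta})$.

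The two structural facts I would use are: (a) a bar involution preserves weights and commutes with the action of every $E_i$, because $C_\barr(E_i)=E_i$; and (b) $K_i$ acts on a weight-$\beta$ vector by the scalar $q^{(\alpha_i,\beta)}$, so by the coproduct formula (\ref{coproduct}) one has $E_i(v^{\beta})=q^{(\alpha_i,\beta)}(E_i\otimes 1)(v^{\beta})+(1\otimes E_i)(v^{\beta})$, a sum whose two terms have $V_\mu$-weight $\beta$ and $\beta+\alpha_i$. Applying $E_i$ to $v_\nu=\sum_\beta v^{\beta}$ and collecting the $V_\mu$-weight-$\gamma$ part of the equation $E_i(v_\nu)=0$ then gives, for every $\gamma$,
\[
q^{(\alpha_i,\gamma)}(E_i\otimes 1)(v^{\gamma})+(1\otimes E_i)(v^{\gamma-\alpha_i})=0 ,
\]
and applying $\barr_{V_\lambda}\otimes\barr_{V_\mu}$ to this relation --- it inverts $q$, commutes with $E_i\otimes 1$ and $1\otimes E_i$ by (a), and preserves $V_\mu$-weights --- converts it into
\[
q^{-(\alpha_i,\gamma)}(E_i\otimes 1)(\overline{v^{\gamma}})+(1\otimes E_i)(\overline{v^{\gamma-\alpha_i}})=0 .
\]

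Finally I would run the same ``apply $E_i$, collect by $V_\mu$-weight'' computation on $\barr(v_\nu)=\sum_\beta g(\beta)\overline{v^{\beta}}$: its $V_\mu$-weight-$\gamma$ part is $g(\gamma)\,q^{(\alpha_i,\gamma)}(E_i\otimes 1)(\overline{v^{\gamma}})+g(\gamma-\alpha_i)(1\otimes E_i)(\overline{v^{\gamma-\alpha_i}})$, and substituting $(1\otimes E_i)(\overline{v^{\gamma-\alpha_i}})=-q^{-(\alpha_i,\gamma)}(E_i\otimes 1)(\overline{v^{\gamma}})$ from the previous display rewrites this as $\bigl(g(\gamma)q^{(\alpha_i,\gamma)}-g(\gamma-\alpha_i)q^{-(\alpha_i,\gamma)}\bigr)(E_i\otimes 1)(\overline{v^{\gamma}})$, which vanishes by the identity recorded in the first paragraph. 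Hence every $V_\mu$-weight component of $E_i(\barr(v_\nu))$ is zero, so $E_i(\barr(v_\nu))=0$ for all $i$ and $\barr(v_\nu)$ is singular. The only real content is the observation that the twisting exponent in (\ref{tbar}) is exactly the one forcing $g(\gamma)q^{(\alpha_i,\gamma)}=g(\gamma-\alpha_i)q^{-(\alpha_i,\gamma)}$ --- intuitively, it repairs the mismatch between the $K_i$ occurring in $\Delta(E_i)$ and the $K_i^{-1}$ produced when a bar involution is commuted past it. I expect everything else (weight-preservation of $\barr_V$, the relation $C_\barr(E_i)=E_i$, the scalar by which $K_i$ acts, and the identity $(\mu,\mu)-(\beta,\beta)+2(\mu-\beta,\rho)=(\mu+\rho,\mu+\rho)-(\beta+\rho,\beta+\rho)$) to be routine.
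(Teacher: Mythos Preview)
Your proof is correct and follows essentially the same approach as the paper: both organise the computation by $V_\mu$-weight, use the coproduct formula $\Delta(E_i)=E_i\otimes K_i+1\otimes E_i$ to extract the weight-$\gamma$ component of $E_i(v_\nu)=0$, and then verify that the scalar $g(\beta)$ is exactly what is needed to make the corresponding component of $E_i(\barr(v_\nu))$ vanish. The only cosmetic difference is that the paper factors the weight-$\beta$ component of $E_i(\barr(v_\nu))$ directly as a scalar times $(\barr_{V_\lambda}\otimes\barr_{V_\mu})(E_iv_\nu)^\beta$, while you first isolate the barred relation and then substitute; your preliminary identity $g(\gamma)q^{(\alpha_i,\gamma)}=g(\gamma-\alpha_i)q^{-(\alpha_i,\gamma)}$ is equivalent to the paper's factorisation step.
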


\begin{proof}
Fix $i \in I$. The vector $v_\nu$ is singular, so $E_i v_\nu =0$ and hence $(E_i v_\nu)^\beta=0$ for all $\beta$. Then:
\begin{equation}
0= (E_i v_\nu)^\beta = \sum_{\wt(c_j) = \beta} q^{(\beta, \alpha_i)} E_i b_j \otimes c_j + \sum_{\wt(c_j)=\beta-\alpha_i} b_j \otimes E_i c_j.
\end{equation}
Using Equation (\ref{tbar}): 
\begin{align}
\nonumber  (E_i \barr(v_\nu))^\beta & = \sum_{\wt(c_j)  = \beta} q^{(\mu,\mu)-(\beta,\beta)+2(\mu-\beta, \rho)} q^{(\beta, \alpha_i)} E_i \bar{b}_j \otimes \bar{c}_j \\
& \hspace{0.2in} + \sum_{\wt(c_j)=\beta-\alpha_i} q^{(\mu,\mu)-(\beta-\alpha_i,\beta-\alpha_i)+2(\mu-\beta+\alpha_i, \rho)} \bar{b}_j \otimes E_i \bar{c}_j \\
&=  q^{(\mu,\mu)-(\beta-\alpha_i,\beta-\alpha_i)+2(\mu-\beta+\alpha_i, \rho)}  \times \\
\nonumber & \hspace{0.5in} \times \left(  \sum_{\wt(c_j) = \beta} q^{-(\beta, \alpha_i)} E_i \bar{b}_j \otimes \bar{c}_j + \sum_{\wt(c_j)=\beta-\alpha_i} \bar{b}_j \otimes E_i \bar{c}_j \right) \\
&=  q^{(\mu,\mu)-(\beta-\alpha_i,\beta-\alpha_i)+2(\mu-\beta+\alpha_i, \rho)}  (\barr_{V_\lambda} \otimes \barr_{V_\mu}) (E_i v_\nu)^\beta,
\end{align}
where $ (\barr_{V_\lambda} \otimes \barr_{V_\mu}) $ is the involution from Definition \ref{bbd}.
But $E_i(v_\nu)^\beta=0$, so we see that $E_i(v_\nu)^\beta=0$. Since this holds for all $i$ and all $\beta$, $\barr(v_\nu)$ is singular.
\end{proof}

\begin{Definition} \label{tb2}
Let $\barr_{(V_\lambda, v_\lambda) \otimes (V_\mu, v_\mu)}$ be the unique involution on $V_\lambda \otimes V_\mu$ which agrees with the involution $\barr$ from Lemma \ref{tensor_involution} on singular vectors, and is compatible with $C_\barr$.
\end{Definition}

\begin{Lemma} \label{isbar}
 $\barr_{(V_\lambda, v_\lambda) \otimes (V_\mu, v_\mu)}$ is a bar involution.
\end{Lemma}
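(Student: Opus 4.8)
The plan is to realize $\barr_{(V_\lambda, v_\lambda)\otimes(V_\mu,v_\mu)}$ as a direct sum of bar involutions coming from Proposition \ref{exists1}. Concretely: produce a $\bq(q)$-basis of the singular space $S_{\lambda,\mu}$ consisting of weight vectors fixed by the map $\barr$ of Lemma \ref{tensor_involution}, decompose $V_\lambda\otimes V_\mu$ into irreducible submodules generated by these vectors, observe that on each summand $\barr_{(V_\lambda, v_\lambda)\otimes(V_\mu,v_\mu)}$ must be the bar involution of that summand fixing its highest weight vector, and then read off the conditions of Definition \ref{barinvdef} summand by summand.

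The first step is to check that $v_\nu\mapsto\barr(v_\nu)$ of Lemma \ref{tensor_involution} is a well-defined, weight-preserving, bar-linear involution of $S_{\lambda,\mu}$. Expanding a vector in a basis $\{b\otimes c\}$ built from bar-invariant weight bases of $V_\lambda$ and $V_\mu$, formula (\ref{tbar}) exhibits $\barr$ as $D\circ(\barr_{V_\lambda}\otimes\barr_{V_\mu})$, where $\barr_{V_\lambda}\otimes\barr_{V_\mu}$ is the involution of Definition \ref{bbd} and $D$ is the $\bq(q)$-linear operator scaling $b\otimes c$ by $q^{(\mu,\mu)-(\wt(c),\wt(c))+2(\mu-\wt(c),\rho)}$; hence $\barr$ is well defined, bar-linear over $\bq(q)$, and weight-preserving. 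Applying (\ref{tbar}) twice and using $\barr_{V_\lambda}^2=\barr_{V_\mu}^2=\Id$, the attached powers of $q$ cancel and $\barr^2=\Id$. A bar-linear involution of a finite-dimensional $\bq(q)$-space has a basis of fixed vectors, so $S_{\lambda,\mu}$ has a weight basis $t_1,\dots,t_r$ with $\barr(t_k)=t_k$.

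Then $V_\lambda\otimes V_\mu=\bigoplus_{k=1}^r U_q(\g)\,t_k$ — directness and exhaustiveness follow from complete reducibility applied weight space by weight space, since $\dim S_{\lambda,\mu}(\nu)$ is the multiplicity of $V_\nu$ — each $U_q(\g)t_k$ being irreducible with highest weight vector $t_k$ and carrying, by Proposition \ref{exists1}, a bar involution $\barr_{(U_q(\g)t_k,\,t_k)}$ determined by $X t_k\mapsto C_\barr(X)t_k$. Their direct sum $\barr_0$ is $\bq$-linear, an involution, compatible with $C_\barr$, and satisfies condition (\ref{bar_span}) of Definition \ref{barinvdef}, each summand having these properties; moreover $\barr_0$ agrees with $\barr$ on $S_{\lambda,\mu}$, since both fix every $t_k$ and are bar-linear there. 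Since a $\bq$-linear map compatible with $C_\barr$ is pinned down on $U_q(\g)t_k$ by its value at $t_k$, hence by its restriction to $S_{\lambda,\mu}$, the map $\barr_0$ is exactly $\barr_{(V_\lambda, v_\lambda)\otimes(V_\mu,v_\mu)}$ of Definition \ref{tb2} (which also justifies ``unique'' there), and the list of properties just verified is the assertion of the Lemma.

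I expect the main obstacle to be the consistency point lurking in the last step: the map $\barr$ of Lemma \ref{tensor_involution} is good only on $S_{\lambda,\mu}$ — it is not compatible with $C_\barr$ on all of $V_\lambda\otimes V_\mu$ — so one cannot set $\barr_{(V_\lambda, v_\lambda)\otimes(V_\mu,v_\mu)}$ equal to $\barr$, nor extend $\barr$ off an \emph{arbitrary} weight basis of $S_{\lambda,\mu}$ summand by summand, since $\barr$ need not respect such a decomposition. Passing to a \emph{bar-invariant} basis is precisely what makes ``$\barr$ on singular vectors'' and ``the $C_\barr$-equivariant extension'' coincide, which is why the real input is that $\barr|_{S_{\lambda,\mu}}$ is a bar-linear \emph{involution}, not merely that it is well defined. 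A variant would extend $\barr$ directly via $\sum_k X_k s_k\mapsto\sum_k C_\barr(X_k)\barr(s_k)$ for an arbitrary weight basis $\{s_k\}$ of $S_{\lambda,\mu}$, which is well defined because $C_\barr$ preserves the annihilator ideal of a highest weight vector (again a consequence of Proposition \ref{exists1}), but it still leaves the involution property to be checked separately, so the bar-invariant-basis route seems cleaner.
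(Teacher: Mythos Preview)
Your proof is correct and takes a genuinely different route from the paper's. The paper establishes condition (\ref{bar_span}) via Lemma \ref{find_highest}: the projection $p^1_{\lambda,\mu}$ is injective on $S_{\lambda,\mu}$, and from formula (\ref{tbar}) one has $p^1_{\lambda,\mu}\circ\barr = \barr_{V_\lambda}\circ p^1_{\lambda,\mu}$ on singular vectors (the $b_0\otimes v_\mu$ term carries the exponent $q^0=1$). Thus the image $p^1_{\lambda,\mu}(S_{\lambda,\mu})\subset V_\lambda$ is $\barr_{V_\lambda}$-stable and admits a bar-invariant basis; its lift to $S_{\lambda,\mu}$ is a basis of singular vectors whose $b_0$-components are bar-invariant, and injectivity of $p^1_{\lambda,\mu}$ then forces each such vector to be fixed by $\barr$. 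You instead observe directly that $\barr|_{S_{\lambda,\mu}} = D\circ(\barr_{V_\lambda}\otimes\barr_{V_\mu})$ is a bar-linear \emph{involution}, and appeal to the general fact that a bar-linear involution of a finite-dimensional $\bq(q)$-space has a basis of fixed vectors. Your route avoids Lemma \ref{find_highest} altogether and, as you point out, simultaneously justifies the word ``unique'' in Definition \ref{tb2}; the paper's route is more hands-on and reuses the projection lemma that later drives the proof of Theorem \ref{main_th}. One minor remark: in infinite type $S_{\lambda,\mu}$ need not be finite-dimensional, so your basis $t_1,\dots,t_r$ should be allowed to be countable, but since each weight space $S_{\lambda,\mu}(\nu)$ is finite-dimensional your argument goes through weight-space by weight-space without change.
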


\begin{proof}
Definition \ref{barinvdef} part (\ref{barinv_comp}) follows immediately from the definition of $\barr_{(V_\lambda, v_\lambda) \otimes (V_\mu, v_\mu)}$. To establish Definition \ref{barinvdef} part (\ref{bar_span}), it suffices to show that there is a basis for the space $S_{\lambda, \mu}$ of singular vetors of $V_\lambda \otimes V_\mu$ which is fixed by  $\barr_{(V_\lambda, v_\lambda) \otimes (V_\mu, v_\mu)}$. Since $V_\lambda= \bq(q) \otimes_\bq V_\lambda^{inv}$, there is a basis for $S_{\lambda, \mu}$ consisting of elements of $V_\lambda^{inv} \otimes_\bq V_\mu$. Using Lemma \ref{find_highest}, we see that there is a basis for $S_{\lambda, \mu}$ consisting of vectors of the form
\begin{equation}
v_\lambda \otimes c_0 + \cdots + b_0 \otimes v_\mu,
\end{equation}
where $\bar{b}_0 = b_0$ and the missing terms are all of the form $b \otimes c$ with $\wt(c)<\mu$. By Definition \ref{tb2} and Lemma \ref{find_highest}, this vector is invariant under $\barr_{(V_\lambda, v_\lambda) \otimes (V_\mu, v_\mu)}$. \end{proof}

In light of Definition \ref{barinvdef} part (\ref{bar_span}), we can extend Definition \ref{tb2} by naturality to construct a $\barr$-involution on $(V, \barr_V) \otimes (W, \barr_W)$ in terms of any $\barr$-involutions $\barr_V$ and $\barr_W$. 

\subsection{The system of endomorphisms $\Theta$}

Consider the $\bq$-algebra automorphism $C_\Theta$ of $U_q(\g)$: 
\begin{equation}
\begin{cases}
C_\Theta (E_i) =   E_{i} K_i^{-1} \\
C_\Theta (F_i) =   K_i F_{i} \\
C_\Theta (K_i) = K_{i}^{-1} \\
C_\Theta (q) = q^{-1}.
\end{cases}
\end{equation}
Notice that $C_\Theta$ is not linear over $\bq(q)$, but instead inverts $q$. One can easily check that $C_\Theta$ is a $\bq$ algebra involution, and that it is also a coalgebra anti-involution. 

 \begin{Definition} \label{Thetadef}
 Fix a representation $V$ with a bar involution $\barr_V$. Then $\Theta_{V, \barr_V}$ is the $\bq$ linear endomorphism of $V$ defined by 
 \begin{equation}
 \Theta_{V, \barr_V}(v)= q^{-( \wt(v), \wt(v))/2 + ( \wt(v), \rho )} \barr_V(v).
 \end{equation}
\end{Definition}

\begin{Comment} \label{thetarestricts}
Using Definitions \ref{bardef}, one can see that, for any irreducible $V_\lambda \subset V$, $\Theta_{V, \barr_V}$ restricts to an endomorphism of $V_\lambda$. 
\end{Comment}

\begin{Comment} There are sometimes weights $\lambda$ for which
$-( \lambda, \lambda)/2 + ( \lambda, \rho )$ is not an integer. However, it is always a multiple of $1/k$ where $k$ is twice the size of the weight lattice mod the root lattice. It is for this reason that we adjoin $q^{1/k}$ to the base field. 
\end{Comment} 

\begin{Lemma} \label{thetaisok}
the following diagram commutes
\begin{equation}
\xymatrix{
V \ar@(dl,dr) \ar@/ /[rrr]^{\Theta_V} &&& V \ar@(dl,dr) \\
U_q(\g)  \ar@/ /[rrr]^{\C_{\Theta}} &&&   U_q(\g). \\
}
\end{equation}
\end{Lemma}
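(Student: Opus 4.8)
The statement unpacks as the identity $\Theta_V(u\cdot v) = C_\Theta(u)\cdot\Theta_V(v)$ for all $u\in U_q(\g)$ and $v\in V$, and the plan is the standard reduction to generators. Both sides are additive in $u$ and in $v$, and the set of $u$ for which the identity holds for all $v$ is closed under $\bq$-linear combinations and under products: if it holds for $u_1$ and $u_2$ then $\Theta_V(u_1u_2v)=C_\Theta(u_1)\Theta_V(u_2v)=C_\Theta(u_1)C_\Theta(u_2)\Theta_V(v)=C_\Theta(u_1u_2)\Theta_V(v)$. Since $U_q(\g)$ is generated over $\bq$ by the scalars $f(q)\in\bq(q)$ together with the $E_i,F_i,K_i$, it is enough to treat $u$ of each of these four types. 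For $u=f(q)$ it is immediate: the prefactor in Definition \ref{Thetadef} depends only on the weight, which $f(q)$ does not change, and $\barr_V$ is bar-linear, so $\Theta_V$ is bar-linear and $\Theta_V(f(q)v)=f(q^{-1})\Theta_V(v)=C_\Theta(f(q))\Theta_V(v)$.

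For the three generators I would fix a weight vector $v\in V(\mu)$ and first note that $\barr_V(v)$ is again of weight $\mu$: compatibility of $\barr_V$ with $C_\barr$ together with $C_\barr(K_i)=K_i^{-1}$ forces $\barr_V$ to preserve each simultaneous $K_i$-eigenspace, and for a type $1$ integrable representation these are exactly the weight spaces. Then I would compute the two sides using only weight bookkeeping: $K_i$ acts on $V(\mu)$ by $q^{(\alpha_i,\mu)}$; $E_i$ (resp. $F_i$) raises (resp. lowers) weight by $\alpha_i$; and $\barr_V$ intertwines the action via $C_\barr$, which fixes $E_i$ and $F_i$. For $u=K_i$ both sides equal $q^{-(\alpha_i,\mu)}\Theta_V(v)$. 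For $u=E_i$ the left side is $q^{-(\mu+\alpha_i,\mu+\alpha_i)/2+(\mu+\alpha_i,\rho)}E_i\barr_V(v)$, while, using $C_\Theta(E_i)=E_iK_i^{-1}$ and that $K_i^{-1}$ acts on $V(\mu)$ by $q^{-(\alpha_i,\mu)}$, the right side is $q^{-(\mu,\mu)/2+(\mu,\rho)-(\alpha_i,\mu)}E_i\barr_V(v)$; the two $q$-exponents agree because their difference is $-(\mu,\alpha_i)-(\alpha_i,\alpha_i)/2+(\alpha_i,\rho)=-(\mu,\alpha_i)$, using $(\alpha_i,\alpha_i)/2=d_i=(\alpha_i,\rho)$. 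The case $u=F_i$ is the same computation with $C_\Theta(F_i)=K_iF_i$, where $K_i$ is now applied to $F_i\barr_V(v)\in V(\mu-\alpha_i)$; the matching of exponents again reduces to the defining property $(\alpha_i,\rho)=d_i$ of $\rho$.

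There is no real conceptual obstacle: everything is the reduction-to-generators argument plus bookkeeping of $q$-powers, and the only non-formal input is the identity $(\alpha_i,\alpha_i)/2=(\alpha_i,\rho)$, which is precisely what makes the weight-dependent prefactor of $\Theta_V$ cancel against the $K_i^{\pm1}$ twist built into $C_\Theta$. The step most prone to an arithmetic slip is the $F_i$ case, where one must keep track that $K_i$ acts on the already-lowered vector $F_i\barr_V(v)$ of weight $\mu-\alpha_i$, and that the normalization of the $K_i$-action is the one from \cite{CP}.
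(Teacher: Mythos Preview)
Your proposal is correct and follows essentially the same approach as the paper: reduce to generators, use that $\barr_V$ intertwines with $C_\barr$ (so fixes $E_i,F_i$ and preserves weight spaces), and compare the $q$-exponents using the identity $(\alpha_i,\alpha_i)/2=(\alpha_i,\rho)=d_i$. The paper's proof in fact only writes out the $F_i$ case and leaves $E_i$ as an exercise (and does not explicitly treat $K_i$ or scalars), so your treatment is, if anything, more complete.
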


\begin{proof} 
It is sufficiant to check that $C_\Theta(X) \Theta_V(v) = \Theta_V(X v)$, where $X=E_i$ or $F_i$. We do the case of $F_i$ and leave $E_i$ as an excersize. Fix $v \in V$.
\begin{align}
\Theta_V(F_i v) &= q^{-( \wt(F_iv), \wt(F_iv))/2 + ( \wt(F_iv), \rho )} \barr_V(F_iv) \\
&= q^{-( \wt(v)-\alpha_i, \wt(v)-\alpha_i)/2 + ( \wt(v)-\alpha_i, \rho )} F_i \barr_V(v) \\
&\label{middleeq} = q^{(\alpha_i, \wt(v)-\alpha_i)}  q^{-( \wt(v), \wt(v))/2 + ( \wt(v), \rho )} F_i \barr_V(v) \\
&=   K_i  F_i q^{-( \wt(v), \wt(v))/2 + ( \wt(v), \rho )}\barr_V(v) \\
&= C_\Theta(F_i) \Theta_V(v). 
\end{align}
where for Equation (\ref{middleeq}) we have used the fact that $(\alpha_i, \alpha_i)/2= (\alpha_i, \rho)=d_i$.   
\end{proof}

\begin{Definition}
Fix two representations with bar involutions $(V, \barr_V)$ and $(W, \barr_W)$. We set $\Theta_{(V, \barr_V) \otimes (W, \barr_W)}$ to be the $\bq$ linear endomorphism of $V \otimes W$ defined by, for all $u \in V \otimes W$,
\begin{equation}
\Theta_{(V, \barr_V) \otimes (W, \barr_W)} (u) = q^{-( \wt(u), \wt(u))/2 + ( \wt(u), \rho )} \barr_{(V \barr_V) \otimes (W, \barr_W)}.
\end{equation}
\end{Definition}

\begin{Comment}
By Lemma \ref{isbar}, $\barr_{(V \barr_V) \otimes (W, \barr_W)}$ is a bar involution on $V \otimes W$, so by Lemma \ref{thetaisok}, $\Theta_{(V, \barr_V) \otimes (W, \barr_W)} $ is compatible with $C_\Theta$.
\end{Comment}

\section{Main Theorem} \label{main}

\begin{Theorem} \label{main_th}
$(\Theta_{V, \barr_V}^{-1}  \otimes \Theta_{W, \barr_W} ^{-1}) \Theta_{(V \otimes \barr_V) \otimes (W, \barr_W)}$ acts on $V \otimes W$ as the standard $R$-matrix. This holds independent of the choice of bar involutions $\barr_V$ and $\barr_W$.
\end{Theorem}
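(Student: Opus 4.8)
The plan is to prove that the operator $T_{V,W}:=(\Theta_{V,\barr_V}^{-1}\otimes\Theta_{W,\barr_W}^{-1})\,\Theta_{(V,\barr_V)\otimes(W,\barr_W)}$ coincides, on every $V\otimes W$, with the standard $R$-matrix $R$ of Proposition \ref{goodR}. Observe first that $\Theta_{(V,\barr_V)\otimes(W,\barr_W)}$ and $\Theta_{V,\barr_V}^{-1}\otimes\Theta_{W,\barr_W}^{-1}$ are each bar-linear and weight-preserving, so their composite $T_{V,W}$ is $\bq(q)$-linear and weight-preserving, just like $R$. By Comment \ref{thetarestricts} and naturality, and since $R$ preserves $V_\lambda\otimes V_\mu$ for irreducible summands, it is enough to prove $T_{V_\lambda,V_\mu}=R$ when $V=V_\lambda$ and $W=V_\mu$ are irreducible. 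On an irreducible module every bar involution arises, via Proposition \ref{exists1}, from a choice of highest weight vector, and since the computation below is insensitive to that choice, proving the irreducible case will also give the asserted independence of the bar involutions.

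First I would reduce the equality $T_{V_\lambda,V_\mu}=R$ to a statement about singular vectors. Each $\Theta_{V,\barr_V}$ is a bar-linear involution, hence invertible, so $T_{V_\lambda,V_\mu}$ is invertible; and by Lemma \ref{thetaisok} (applied to $V_\lambda$, $V_\mu$ and $V_\lambda\otimes V_\mu$), together with the mechanism of Section \ref{autint} and the fact that $C_\Theta$ is an algebra automorphism and coalgebra anti-involution, the map $\Flip\circ T_{V_\lambda,V_\mu}\colon V_\lambda\otimes V_\mu\to V_\mu\otimes V_\lambda$ is a morphism of $U_q(\g)$-modules; equivalently $T_{V_\lambda,V_\mu}\Delta(u)=\Delta^{op}(u)T_{V_\lambda,V_\mu}$ for all $u\in U_q(\g)$. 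The same identity holds for $R$ by Definition \ref{rR}, part (\ref{intertwine}). Hence $T_{V_\lambda,V_\mu}^{-1}R$ commutes with the $U_q(\g)$-action on $V_\lambda\otimes V_\mu$, i.e.\ it is a module endomorphism. Since $V_\lambda\otimes V_\mu$ is a direct sum of integrable highest weight modules, each generated over $U_q(\g)$ by its singular highest weight vector, such an endomorphism equals the identity if and only if it fixes every singular vector. So it suffices to show that $T_{V_\lambda,V_\mu}(z)=R(z)$ for every $z\in S_{\lambda,\mu}$.

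For this I would compare the two sides through the projections of Lemma \ref{find_highest}. Write $z$ in the form $v_\lambda\otimes c_0+\cdots+b_0\otimes v_\mu$ used in Definition \ref{p_def}, so that $b_0=p^1_{\lambda,\mu}(z)$ has weight $\wt(z)-\mu$ and, since $V_\mu(\mu)=\bq(q)v_\mu$, the term $b_0\otimes v_\mu$ is exactly the part of $z$ whose second tensor factor has weight $\mu$. As above, both $\Flip(R(z))$ and $\Flip(T_{V_\lambda,V_\mu}(z))$ are singular vectors of $V_\mu\otimes V_\lambda$ of weight $\wt(z)$, so by the injectivity of $p^2_{\mu,\lambda}$ on singular vectors (Lemma \ref{find_highest}) it is enough to check that they have the same $v_\mu\otimes(\,\cdot\,)$-component. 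For $R$: writing $R=A(1\otimes1+\sum_\beta X_\beta\otimes Y_\beta)$ as in Proposition \ref{goodR}, each $Y_\beta$ strictly lowers weight, so no summand $A(X_\beta\otimes Y_\beta)$ can leave a weight-$\mu$ vector in the second tensor factor; hence the second-factor-weight-$\mu$ part of $R(z)$ is $A(b_0\otimes v_\mu)=q^{(\wt(b_0),\mu)}\,b_0\otimes v_\mu$. For $T_{V_\lambda,V_\mu}$: writing $z=\sum_j b_j\otimes c_j$ in a weight basis and using the formula of Lemma \ref{tensor_involution} for $\barr_{(V_\lambda,v_\lambda)\otimes(V_\mu,v_\mu)}(z)$, Definition \ref{Thetadef} for the three $\Theta$'s, and the bar-linearity of $\Theta_{V_\lambda}^{-1}\otimes\Theta_{V_\mu}^{-1}$, a bookkeeping of $q$-exponents gives
\begin{equation*}
T_{V_\lambda,V_\mu}(z)=\sum_j q^{\,(\wt(z),\wt(c_j))+2(\wt(c_j),\rho)-(\mu,\mu)-2(\mu,\rho)}\;b_j\otimes c_j,
\end{equation*}
so the coefficient of $b_j\otimes c_j$ depends only on $\wt(c_j)$, and on the term $b_0\otimes v_\mu$ it equals $q^{(\wt(z)-\mu,\mu)}=q^{(\wt(b_0),\mu)}$. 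Thus the $v_\mu\otimes(\,\cdot\,)$-components of $\Flip(R(z))$ and $\Flip(T_{V_\lambda,V_\mu}(z))$ agree, so $\Flip(R(z))=\Flip(T_{V_\lambda,V_\mu}(z))$ and therefore $R(z)=T_{V_\lambda,V_\mu}(z)$, as required.

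The main obstacle is the last computation: one must verify that the $q$-powers introduced by Definition \ref{Thetadef} for $V_\lambda$, $V_\mu$ and $V_\lambda\otimes V_\mu$ combine with the weight-dependent powers of Lemma \ref{tensor_involution} so that each coefficient of $T_{V_\lambda,V_\mu}(z)$ depends on $\wt(c_j)$ alone, and in particular so that the $v_\mu$-coefficient is precisely $q^{(\wt(b_0),\mu)}$, matching the leading term $A$ of $R$. This is exactly where the normalization $-(\wt(v),\wt(v))/2+(\wt(v),\rho)$ in $\Theta$ and the $\rho$-shift $2(\mu-\wt(c_j),\rho)$ in Lemma \ref{tensor_involution} are used, and the theorem rests on this cancellation; the rest — reduction to irreducibles, the module-endomorphism argument, and the triangularity of $R$ — is formal.
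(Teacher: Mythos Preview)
Your proposal is correct and follows essentially the same route as the paper: reduce to irreducibles, use Lemma \ref{thetaisok} and the coalgebra anti-involution property of $C_\Theta$ to see that $\Flip\circ T$ is a $U_q(\g)$-module map (so it suffices to check singular vectors), compute the second-factor-weight-$\mu$ part of both $T(z)$ and $R(z)$ using Definitions \ref{tb2}, \ref{Thetadef} and the triangular form of $R$ in Proposition \ref{goodR}, and then invoke Lemma \ref{find_highest}. The only difference is cosmetic: you write down the full closed formula for $T_{V_\lambda,V_\mu}(z)$ on singular $z$ (which indeed simplifies to the stated exponent $(\wt(z),\wt(c_j))+2(\wt(c_j),\rho)-(\mu,\mu)-2(\mu,\rho)$), whereas the paper only tracks the single term $b_0\otimes v_\mu$; both lead to the same coefficient $q^{(\wt(b_0),\mu)}$ and the same conclusion.
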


\begin{proof} We will actually prove the equivalent statement that
\begin{equation}
\sigma^\Theta:= \Flip \circ (\Theta_{V, \barr_V}^{-1}  \otimes \Theta_{W, \barr_W} ^{-1}) \Theta_{(V \otimes \barr_V) \otimes (W, \barr_W)}
\end{equation}
acts on $V \otimes W$ as the standard braiding $\Flip \circ R$. By  Lemma \ref{thetaisok} and the fact that $C_\Theta$ is a $\bq$ coalgebra anti-automorphism, the following diagram commutes:
\begin{equation}
\xymatrix{
V \otimes W \ar@(dl,dr) \ar@/ /[rrrr]^{ \Theta_{(V \otimes \barr_V) \otimes (W, \barr_W)}} &&&& V \otimes W \ar@(dl,dr)  \ar@/ /[rrrrr]^{ \Flip \circ (\Theta_{V \otimes \barr_V}^{-1} \otimes \Theta_{W, \barr_W}^{-1})} &&&&& W \otimes V \ar@(dl,dr)\\
U_q(\g)  \ar@/ /[rrrr]^{\C_{\Theta}} &&&&   U_q(\g)   \ar@/ /[rrrrr]^{\C_{\Theta}^{-1}} &&&&&   U_q(\g). \\
}
\end{equation}
In particular, $\sigma^\Theta: V \otimes W \rightarrow W \otimes V$ is an isomorphism. Thus it suffices to show that  $\sigma^\Theta(v_\nu)= \Flip \circ R(v_\nu)$ for every singular weight vector $v_\nu \in V \otimes W$. By naturality it is enough to consider the case when $V$ and $W$ are irreducible, so let $v_\nu$ be a singular vector in $V_\lambda \otimes V_\mu$. Write 
\begin{equation}
v_\nu= b_\lambda \otimes c_0 + b_{k-1} \otimes c_1 + \ldots + b_1 \otimes c_{k-1} + b_0 \otimes b_\mu,
\end{equation}
where for $0 \leq j \leq k-1$, $b_j$ is a weight vector of $V_\mu$ of weight strictly less then $\mu$. By Definitions \ref{tb2} and \ref{Thetadef}, 
\begin{align}  \sigma^\Theta(v_\nu) 
&= \Flip \circ (\Theta_{V_\lambda, \barr_{V_\lambda}}^{-1} \otimes \Theta_{V_\mu, \barr_{V_\mu}}^{-1}) \Theta_{(V_\lambda, \barr_{V_\lambda}) \otimes (V_\mu, \barr_{V_\mu})}(\cdots +  b_0 \otimes b_\mu ) \\
&= \Flip \circ (\Theta^{-1}_{V_\lambda, \barr_{V_\lambda}} \otimes \Theta^{-1}_{V_\mu, \barr_{V_\mu}})
(q^{-(\mu+\wt(b_0), \mu+\wt(b_0))/2+(\mu+\wt(b_0),\rho) }( \ldots + \bar{b}_0 \otimes \bar{b}_\mu)) \\
 &= q^{-(\wt(b_0), \wt(b_0))/2  -(\mu, \mu)/2+(\mu+\wt(b_0), \mu+\wt(b_0))/2 } b_\mu \otimes b_0 + \ldots \\
& =q^{(\wt(b_0), \mu)}  b_\mu \otimes b_0 + \ldots,
\end{align}
where $\ldots$ always represents terms where the factor coming from $V_\mu$ has weight strictly less then $\mu$.
It follows immediately from Proposition \ref{goodR} that 
\begin{equation}
\Flip \circ R (v_\nu)= q^{(\wt(b_0), \mu)} b_\mu \otimes b_0 + \ldots,
\end{equation}
where again $\ldots$ represents terms of the form $c \otimes b$ where $\wt(c) < \mu$. Both $\sigma^\Theta(v_\nu)$ and $\Flip \circ R (v_\nu)$ are singular vectors in $V_\mu \otimes V_\lambda$, so by Lemma \ref{find_highest} they are equal.
\end{proof}

\begin{Comment}
The above proof works independent of the choice of $\barr_V$ and $\barr_W$. One can also see directly that $\sigma^\Theta$ does not depend on these choices. Restrict to the irreducible case, and notice that by Lemma \ref{exists1}, $\sigma^\Theta$ depends only the a choice of highest weight vectors $v_\lambda$ and $v_\mu$. It is straightforward to check that rescaling these vectors has no effect on $\sigma^\Theta$.
\end{Comment}

\begin{Comment}
One can check that $\Theta_V$ is an involution of $\bq$ vector spaces, so the inverses in the statement of Theorem \ref{main} are in some sense unnecessary. We include them because $\Theta_{V}$ should really be thought of as an isomorphism between $V$ and the module which is $V$ as a $\bq$ vector space, but with the action of $U_q(\g)$ twisted by $C_\Theta$. We have not specified the action of $\Theta$ on this new module. The way the formula is written, $\Theta$ is always acting on $V, W$ or $V \otimes W$ with the usual action, where it has been defined.  \end{Comment}

\section{Future directions} \label{Questions}

We have two main motivations for developing our formula for the R-matrix. 

\begin{Motivation}
In work with Joel Kamnitzer \cite{Rcommutor}, we showed that Drinfeld's unitarized R-matrix $\bar{R}$ (see \cite{D}) respects crystal basis (up to some signs). Composing with $\Flip$, we see that $\bar{R}$ descends to a crystal map from $B \otimes C$ to $C \otimes B$, which is fact agrees with the crystal commutor defined in \cite{cactus}. We make extensive use of Equation (\ref{KReq}), so our methods are only valid in the finite type case. However Drinfeld's unitarized R-matrix is defined in the symmetrizable Kac-Moody case, as is the crystal commutor (see \cite{KT1} and \cite{Savage}). We hope that the formula given in Theorem \ref{main_th} will help us to extend some of the results in \cite{Rcommutor} to the symmetrizable Kac-Moody case. 
\end{Motivation}

\begin{Motivation}
Recall that the action of the braiding $\Flip \circ R$ on $V \otimes W$ can be drawn diagrammatically as passing a string labeled $V$ over a string labeled $W$. If we use flat ribbons in place of strings, as it is often convenient to do, one can consider the following isotopy:

\vspace{0.05cm}
\setlength{\unitlength}{0.55cm}
\begin{center}
\begin{picture}(10,6)
\put(6,0.2){\Dtst}
\put(6,3.2){\ptwist}
\put(8,3.2){\ptwist}
\put(6,-0.7){\puuv[U]}
\put(8,-0.7){\puuv[V]}
\put(6,4.7){\puuv[V]}
\put(8,4.7){\puuv[U]}

\put(4.5,2.5){$\simeq$}
\put(-0.18,-0.44){\pplaincrossing}
\put(0,3.2){\uid}
\put(2,3.2){\uid}
\put(0,-0.7){\puuv[U]}
\put(2,-0.7){\puuv[V]}
\put(0,4.7){\puuv[V]}
\put(2,4.7){\puuv[U]}

\end{picture}
\end{center}
\vspace{0.1cm}
Roughly, if one interprets twisting a ribbon by 180 degrees as $X$, and twisting two ribbon together as at the bottom on the right side as $\Flip \circ \Delta(X)$, the two sides of this isotopy correspond to the two sides of Equation (\ref{KReq}), written as
\begin{equation}
\Flip \circ R= \Flip \circ (X^{-1} \otimes X^{-1}) \Delta(X) =(X^{-1} \otimes X^{-1}) \circ \Flip \circ \Delta(X).
\end{equation}
In work with Noah Snyder \cite{half_twist}, we make this precise. 
One should be able to use our new formula to give a precise interpretation of ``twisting a ribbon by 180 degrees" in the symmetrizable Kac-Moody case. It is for this reason that we use the term ``half twist type formula" in our title. 
\end{Motivation}


\begin{thebibliography}{E-G-S}

\bibitem[CP]{CP}
V. Chari and A. Pressley. \textit{A Guide to Quantum Groups}, Cambridge University Press, 1994.

\bibitem[D]{D}
V. G. Drinfel'd. Quasi-Hopf algebras, \textit{Leningrad Math. J.}  \textbf{1}  (1990),  no. 6, 1419--1457.

\bibitem[HK]{cactus} A. Henriques and J. Kamnitzer, Crystals and coboundary categories, \textit{Duke Math. J.}, \textbf{132} (2006) no. 2, 191--216; math.QA/0406478.

\bibitem[KT1]{KT1} J. Kamnitzer and P. Tingley. A definition of the crystal commutor using Kashiwara's involution. To appear in J. Algebr Comb. arXiv:math/0610952v2.


\bibitem[KT2]{Rcommutor} J. Kamnitzer and P. Tingley. The crystal commutor and Drinfeld's unitarized $R$-matrix. To appear in J. Algebr Comb. arXiv:0707.2248v2.

\bibitem[K]{K} M. Kashiwara, On crystal bases of the q-analogue of the universal enveloping algebras, \textit{Duke Math. J.}, \textbf{63} (1991), no. 2, 465--516.

\bibitem[KR]{KR:1990} A. N. Kirillov and N. Reshetikhin, q-Weyl group and a multiplicative formula for universal R-matrices, \textit{Comm. Math. Phys.} \textbf{134} (1990), no. 2, 421--431.

\bibitem[LS]{LS} S. Z. Levendorskii and Ya. S. Soibelman,
The quantum Weyl group and a multiplicative formula for the R-matrix of a simple Lie algebra, \textit{Funct. Anal. Appl.} \textbf{25} (1991), no. 2, 143--145.

\bibitem[L]{Lusztig:1993} G. Lusztig. \textit{Introduction to quantum groups,} Birkh\"auser Boston Inc. 1993.

\bibitem[S]{Savage} A. Savage. Crystals, Quiver varieties and coboundary categories for Kac-Moody algebras. Preprint arXiv:0802.4083.

\bibitem[ST]{half_twist} N. Snyder and P. Tingley. The half twist for $U_q(\g)$. Preprint arXiv:0810.0084v2. 

\bibitem[T]{RTheta} P. Tingley A formula for the $R$-matrix using a system of weight preserving endomorphisms. Preprint arXiv:0711.4853v2.

\end{thebibliography}
\end{document}